\documentclass[12pt,reqno]{article}

\usepackage[usenames]{color}
\usepackage{amssymb}
\usepackage{amsmath}
\usepackage{amsthm}
\usepackage{amsfonts}
\usepackage{amscd}
\usepackage{graphicx}

\usepackage[colorlinks=true,
linkcolor=webgreen,
filecolor=webbrown,
citecolor=webgreen]{hyperref}

\definecolor{webgreen}{rgb}{0,.5,0}
\definecolor{webbrown}{rgb}{.6,0,0}

\usepackage{color}
\usepackage{fullpage}
\usepackage{float}

\usepackage{graphics}
\usepackage{latexsym}
\usepackage{epsf}
\usepackage{breakurl}

\setlength{\textwidth}{6.5in}
\setlength{\oddsidemargin}{.1in}
\setlength{\evensidemargin}{.1in}
\setlength{\topmargin}{-.1in}
\setlength{\textheight}{8.4in}

\begin{document}
	
	
	\theoremstyle{plain}
	\newtheorem{theorem}{Theorem}
	\newtheorem{corollary}[theorem]{Corollary}
	\newtheorem{lemma}[theorem]{Lemma}
	\newtheorem{proposition}[theorem]{Proposition}
	
	\theoremstyle{definition}
	\newtheorem{definition}[theorem]{Definition}
	\newtheorem{example}[theorem]{Example}
	\newtheorem{conjecture}[theorem]{Conjecture}
	
	\theoremstyle{remark}
	\newtheorem{remark}[theorem]{Remark}
	
	\begin{center}
		\vskip 1cm{\LARGE\bf A Note on Sumsets and Restricted Sumsets
			\vskip 1cm}
		\large
		Jagannath Bhanja\\
		Harish-Chandra Research Institute\\ 
		Chhatnag Road, Jhunsi\\ 
		Prayagraj 211019\\ 
		India \\
		\href{mailto:jagannathbhanja@hri.res.in}{\tt jagannathbhanja@hri.res.in} \\
	\end{center}
	\vskip .2 in

	\begin{abstract} 
		In this note we find the optimal lower bound for the size of the sumsets $HA$ and $H\,\hat{}A$ over finite sets $H, A$ of nonnegative integers, where $HA = \bigcup_{h\in H} hA$ and $H\,\hat{}A = \bigcup_{h\in H} h\,\hat{}A$. We also find the underlying algebraic structure of the sets $A$ and $H$ for which the size of the sumsets $HA$ and $H\,\hat{}A$ is minimum.  
	\end{abstract}

	\section{Introduction}\label{sec1}
	
	For a given finite set $A$ of integers and for a positive integer $h$, the {\em sumset} $hA$ and the {\em restricted sumset} $h\,\hat{}A$ are fundamental objects in the field of \emph{additive number theory}. The sumset $hA$ is the set of integers that can be written as the sum of $h$ elements of $A$, whereas the sumset $h\,\hat{}A$ is the set of integers that can be written as the sum of $h$ 
	{\it pairwise distinct\/} elements of $A$. In this regard, two of the important problems in additive number theory are to find the best possible lower bounds for the size of the sumsets $hA$ and $h\,\hat{}A$, and to find the structure of the finite set $A$ for which the sumsets $hA$ and $h\,\hat{}A$ contain the
	minimum number of elements. These two problems have been well established in the group of integers \cite{nathu, nathanson}.
	
	\begin{theorem} {\cite[Theorem 1.4, Theorem 1.6]{nathanson}} \label{theoremI}
		Let $A$ be a finite set of $k$ integers. Let $h$ be a positive integer. Then
		\begin{equation*}
			|hA|\geq h(k-1)+1.
		\end{equation*}
		Moreover, if this lower bound is exact with $h\geq 2$, then $A$ is an arithmetic progression.
	\end{theorem}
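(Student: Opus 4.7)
The plan is to split the argument into the inequality and the equality case, and to deduce both from the basic two-set bound $|A+B|\ge |A|+|B|-1$ for finite sets of integers. First I would establish this two-set bound directly: writing $A=\{a_0<\cdots<a_{k-1}\}$ and $B=\{b_0<\cdots<b_{m-1}\}$, the chain
\begin{equation*}
a_0+b_0<a_0+b_1<\cdots<a_0+b_{m-1}<a_1+b_{m-1}<\cdots<a_{k-1}+b_{m-1}
\end{equation*}
consists of $k+m-1$ strictly increasing elements of $A+B$, which gives the bound.

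Next I would prove the lower bound $|hA|\ge h(k-1)+1$ by induction on $h$. The base case $h=1$ is immediate, and for the inductive step I would write $hA=(h-1)A+A$ and apply the two-set bound to get
\begin{equation*}
|hA|\ge |(h-1)A|+|A|-1\ge (h-1)(k-1)+1+(k-1)=h(k-1)+1.
\end{equation*}

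For the equality case, assume $h\ge 2$ and $|hA|=h(k-1)+1$. Running the same chain of inequalities backward forces equality at every step, and in particular
\begin{equation*}
|(h-1)A+A|=|(h-1)A|+|A|-1.
\end{equation*}
At this point I would invoke the inverse theorem for the two-set bound over the integers: if $B,C$ are finite sets of integers with $|B|,|C|\ge 2$ and $|B+C|=|B|+|C|-1$, then $B$ and $C$ are arithmetic progressions with a common difference. Since $|A|=k\ge 2$ (otherwise the equality case is vacuous) and $|(h-1)A|\ge (h-1)(k-1)+1\ge 2$, this directly yields that $A$ is an arithmetic progression.

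I expect the main obstacle to be the inverse step, since the direct bound is a short counting argument while the characterization of equality requires the structural theorem for two-set sums. One could avoid quoting it by proving the inverse statement from scratch — most naturally by normalizing $\min A=0$, $\gcd(A)=1$, and then showing that the rigidity of the chain used above forces $a_i-a_{i-1}$ to be constant — but this requires the kind of case analysis that is typically the delicate part of such arguments.
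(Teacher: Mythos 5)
The paper does not prove this statement; it is imported verbatim from Nathanson (Theorems 1.4 and 1.6 of the cited book), so there is no in-paper proof to compare against. Your argument is correct and is essentially the standard one from that source: the chain argument gives $|A+B|\ge |A|+|B|-1$, induction via $hA=(h-1)A+A$ gives the lower bound, and equality forces $|(h-1)A+A|=|(h-1)A|+|A|-1$ with both summands of size at least $2$, whence the two-set inverse theorem (itself a short rigidity argument on the chain, which you correctly identify as the only nontrivial ingredient) shows $A$ is an arithmetic progression.
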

	
	\begin{theorem} {\cite[Theorem 1, Theorem 2]{nathu}} \label{theoremII}
		Let $A$ be a finite set of $k$ integers. Let $h\leq k$ be a positive integer. Then
		\begin{equation*}
			|h\,\hat{}A|\geq h(k-h)+1.
		\end{equation*}
		Moreover, if this lower bound is exact with $k\geq 5$ and $2\leq h\leq k-2$, then $A$ is an arithmetic progression.
	\end{theorem}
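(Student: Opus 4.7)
The plan is to separate the argument. For the lower bound $|h\,\hat{}A|\geq h(k-h)+1$, I would construct a chain of strictly increasing sums in $h\,\hat{}A$ by single-index substitutions, and for the characterization of equality I would induct on $h$, using the lower bound itself as a key input.

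Order the elements as $a_0<a_1<\cdots<a_{k-1}$ and start from $s_0=a_0+a_1+\cdots+a_{h-1}$. The construction proceeds in $h$ phases: in phase $i$ (for $i=1,\ldots,h$), the current sum has the form $a_0+a_1+\cdots+a_{h-i-1}+a_{h-i}+a_{k-i+1}+\cdots+a_{k-1}$, and I successively replace the index-$(h-i)$ entry by $a_{h-i+1},a_{h-i+2},\ldots,a_{k-i}$, keeping the other positions fixed. Each substitution swaps a smaller-indexed element for a larger-indexed one, so the sums strictly increase, are pairwise distinct, and lie in $h\,\hat{}A$. Each phase contributes $k-h$ new sums, yielding $1+h(k-h)$ in total.

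For the characterization of equality, I would induct on $h$. The base $h=2$ with $k\geq 5$ is the most delicate and would be handled by a Freiman-type argument, showing that $|2\,\hat{}A|=2k-3$ forces $A$ to be an arithmetic progression. For $3\leq h\leq k-2$, set $A'=A\setminus\{a_{k-1}\}$ and use the inclusion
\[
h\,\hat{}A \;\supseteq\; (h\,\hat{}A')\,\cup\,\bigl((h-1)\,\hat{}A'+a_{k-1}\bigr).
\]
Combined with $|h\,\hat{}A'|\geq h(k-1-h)+1$, $|(h-1)\,\hat{}A'|\geq (h-1)(k-h)+1$, and the extremality $|h\,\hat{}A|=h(k-h)+1$, the resulting chain of inequalities should saturate, forcing $|(h-1)\,\hat{}A'|=(h-1)(k-h)+1$ so that the induction hypothesis applies to $A'$. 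A final case analysis then shows that $a_{k-1}$ extends the AP rather than breaking it.

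The main obstacle is controlling, in the inductive step, the overlap between $h\,\hat{}A'$ and $(h-1)\,\hat{}A'+a_{k-1}$: an element in the intersection corresponds to an identity $\sum_{j\in S}a_j=a_{k-1}+\sum_{j\in T}a_j$ with $|S|=h$, $|T|=h-1$, $S,T\subseteq\{0,\ldots,k-2\}$, and the extremal case forces this overlap to take a very specific value. This is precisely where the hypotheses $k\geq 5$ and $2\leq h\leq k-2$ become essential: extreme values like $h\in\{1,k-1,k\}$ trivialize the problem and admit non-AP extremizers, and small $k$ can harbor sporadic counterexamples that must be ruled out by hand.
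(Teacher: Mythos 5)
First, a point of reference: the paper does not prove this statement at all --- it is quoted from Nathanson's \emph{Inverse theorems for subset sums} and used as a black box in Sections 2 and 3 --- so there is no in-paper proof to compare you against, and your attempt must stand on its own. The lower-bound half does: your phase-by-phase single-index substitution is the standard argument, the index bookkeeping checks out (in phase $i$ the slot being advanced runs over $a_{h-i},\ldots,a_{k-i}$, giving $k-h$ strict increases per phase, and consecutive phases concatenate), and it yields $h(k-h)+1$ distinct elements of $h\,\hat{}A$ as claimed.

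The inverse half, however, is a plan rather than a proof, with two genuine gaps. First, the base case $h=2$ --- that $|2\,\hat{}A|=2k-3$ with $k\geq 5$ forces an arithmetic progression --- is the analytic heart of the theorem, and you dispose of it with the phrase ``a Freiman-type argument''; nothing is actually proved there. Second, in the inductive step, writing $X=h\,\hat{}A'$ and $Y=(h-1)\,\hat{}A'+a_{k-1}$, the inclusion $X\cup Y\subseteq h\,\hat{}A$ together with the lower bounds $|X|\geq h(k-1-h)+1$ and $|Y|\geq (h-1)(k-h)+1$ and the extremality $|h\,\hat{}A|=h(k-h)+1$ gives only $|X\cap Y|\geq (h-1)(k-h-1)$; it does not force $|Y|$ to be minimal. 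To conclude $|(h-1)\,\hat{}A'|=(h-1)(k-h)+1$ and invoke the induction hypothesis, you need an \emph{upper} bound on the overlap, i.e., you must actually classify the identities $\sum_{j\in S}a_j=a_{k-1}+\sum_{j\in T}a_j$ --- precisely the step you flag as ``the main obstacle'' and leave open. There is also a boundary issue: applying the induction hypothesis to $A'$ (with $k-1$ elements and parameter $h-1$) requires $k-1\geq 5$, so the case $k=5$, $h=3$ escapes it and needs separate treatment, e.g., via the complementation identity $|h\,\hat{}A|=|(k-h)\,\hat{}A|$. Until these points are filled in, the characterization of equality is not established.
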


	Now let $H$ be a given finite set of nonnegative integers. Define the sumset \cite[p.\ 175]{bajnok}
	\[HA := \bigcup_{h\in H} hA,\]
	and the restricted sumset
	\[H\,\hat{}A := \bigcup_{h\in H} h\,\hat{}A.\]
	Here we are assuming that $0A=0\,\hat{}A=\{0\}$.

	For a set $A$ and for an integer $c$, we let $c \cdot A = \{ca: a \in A\}$. For integers $a, b$ with $a \leq b$, we also let $[a, b]=\{a,a+1,\ldots,b\}$.

	The case $H=[0, r]$ is more interesting and has been studied before (for recent papers, see \cite{bajnok, balandraud, bhanja}). Bajnok \cite{bajnok} defined the sumsets for an arbitrary finite set $H$ of nonnegative integers, and asked to 
	study similar problems like the sumsets $hA$ and $h\,\hat{}A$ over finite abelian groups. That is, find the optimal lower bound for the size of the sumsets $HA$ and $H\,\hat{}A$, and the structure of the sets $H$, $A$ for which the sumsets $HA$, $H\,\hat{}A$ contain the minimum number of elements.

	In this note, we study these two problems for the sumset $HA$ in Section \ref{sec2}, and the sumset $H\,\hat{}A$ in Section \ref{sec3}, for finite sets $A$ of nonnegative integers (or nonpositive integers) and $H$ of nonnegative integers. We consider two separate cases, namely
	\begin{itemize}
		\item[(i)] the set $A$ consists of positive integers and
		\item[(ii)] the set $A$ consists of nonnegative integers.
	\end{itemize}
	The cases
	\begin{itemize}
		\item[(iii)] the set $A$ consists of negative integers and
		\item[(iv)] the set $A$ consists of nonpositive integers,
	\end{itemize}
	follow from the cases (i) and (ii), respectively, as $H(c \cdot A) = c \cdot HA$ and $H\,\hat{} (c \cdot A) = c \cdot H\,\hat{}A$ for arbitrary integers $c$. As consequences of our results we obtain some recent results in this direction.

	In Section \ref{sec2} and Section \ref{sec3}, we use the following notation:
	for a set $S=\{s_{1}, s_{2}, \ldots, s_{k-1}, s_{k}\}$ with $k \geq 2$ and $s_{1}< s_{2}< \cdots< s_{k-1}< s_{k}$, we write $\min(S)=s_{1}$, $\min\nolimits_{+}(S)=s_{2}$, $\max(S)=s_{k}$, and $\max\nolimits_{-}(S)=s_{k-1}$.

	\section{Regular sumset}\label{sec2}
	
	\begin{theorem}\label{sumset-thm}
		Let $A$ be a set of $k$ positive integers. Let $H$ be a set of $r$ positive integers with $\max(H)=h_{r}$. Then
		\begin{equation}\label{sumset-eqn1}
			|HA| \geq h_{r}(k-1)+r.
		\end{equation}
		This lower bound is optimal.
	\end{theorem}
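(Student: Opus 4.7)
The plan is a short induction on $r=|H|$. The base case $r=1$ is immediate: then $HA=h_{1}A$ with $h_{1}=h_{r}$, so Theorem~\ref{theoremI} gives $|h_{1}A|\geq h_{1}(k-1)+1=h_{r}(k-1)+r$.

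For the inductive step I would write $H=\{h_{1}<h_{2}<\cdots<h_{r}\}$ and $A=\{a_{1}<a_{2}<\cdots<a_{k}\}$, and set $H'=H\setminus\{h_{1}\}=\{h_{2}<\cdots<h_{r}\}$. Since $|H'|=r-1$ and $\max(H')=h_{r}$, the inductive hypothesis yields $|H'A|\geq h_{r}(k-1)+(r-1)$. It then suffices to exhibit a single element of $HA$ that is missing from $H'A$.

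The natural candidate is $h_{1}a_{1}\in h_{1}A\subseteq HA$. For every $h\in H'$ and every $a\in A$, positivity of $A$ gives $ha\geq h_{2}a_{1}>h_{1}a_{1}$, since $a_{1}\geq 1$ and $h_{2}>h_{1}$. Thus $h_{1}a_{1}$ lies strictly below every element of $H'A$, so $h_{1}a_{1}\notin H'A$, and combining with the inductive bound yields $|HA|\geq |H'A|+1\geq h_{r}(k-1)+r$, as required.

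For optimality I would exhibit the explicit extremal pair $A=[1,k]$ and $H=[1,r]$: here each $h_{i}A=[h_{i},h_{i}k]$, and because $h_{i+1}=h_{i}+1\leq h_{i}k+1$, consecutive intervals overlap, giving $HA=[1,rk]$ with $|HA|=rk=h_{r}(k-1)+r$. The only subtle step is the strict inequality $h_{1}a_{1}<h_{2}a_{1}$, which rests crucially on $a_{1}\geq 1$; this is exactly what the hypothesis that $A$ consists of \emph{positive} integers provides, and it is the reason why the companion case with $0\in A$ (case (ii) in the introduction) must be handled separately rather than absorbed into the same induction.
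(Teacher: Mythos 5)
Your argument is correct, but it distributes the count differently from the paper. The paper partitions $HA$ from the top of $A$: it exhibits $r$ pairwise disjoint blocks $S_{1}=h_{1}A$ and $S_{i}=(h_{i}-h_{i-1})A+h_{i-1}a_{k}$, each sitting above the maximum of the previous one, and applies Theorem~\ref{theoremI} to every block, so each $h_{i}$ contributes $(h_{i}-h_{i-1})(k-1)+1$ elements. You instead anchor at the bottom of $A$: unwinding your induction, you apply Theorem~\ref{theoremI} once to $h_{r}A$ (getting $h_{r}(k-1)+1$) and then observe that the $r-1$ points $h_{1}a_{1}<h_{2}a_{1}<\cdots<h_{r-1}a_{1}$ all lie strictly below $\min(h_{j}A)=h_{j}a_{1}$ for every larger $h_{j}$, contributing one new element each. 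One small imprecision: elements of $H'A$ are $h$-fold sums, not products $ha$, but since every $h$-fold sum of elements of $A$ is at least $ha_{1}\geq h_{2}a_{1}>h_{1}a_{1}$, your disjointness claim stands. Your route is shorter and isolates exactly where positivity of $A$ is used; the paper's heavier block decomposition pays off later, since the inverse result (Theorem~\ref{sumset-inverse-thm}) extracts the arithmetic-progression structure of $A$ and $H$ precisely by forcing $HA$ to coincide with $\bigcup_{i}S_{i}$ and each $S_{i}$ to be extremal. Your optimality example is the same as the paper's.
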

	
	\begin{proof}
		Let $A=\{a_{1},a_{2},\ldots,a_{k}\}$ and $H=\{h_{1},h_{2},\ldots,h_{r}\}$, where $0<a_{1}<a_{2}<\cdots<a_{k}$ and $0<h_{1}<h_{2}<\cdots<h_{r}$. Set
		\begin{equation}\label{set1}
			S_{1} := h_{1}A
		\end{equation}
		and
		\begin{equation}\label{seti}
			S_{i} := (h_{i}-h_{i-1})A+h_{i-1}a_{k},
		\end{equation}
		for $i=2,3,\ldots,r$. Clearly $S_{i} \subseteq h_{i}A$ for $i=1,2,\ldots,r$, and $\max(S_{i}) < \min(S_{i+1})$ for $i=1,2,\ldots,r-1$. Therefore $S_{1}, S_{2}, \ldots, S_{r}$ are pairwise disjoint subsets of $HA$. Hence, by Theorem \ref{theoremI}, we have 
		\begin{align}\label{sumset-eqn2}
			|HA| &\geq \sum_{i=1}^{r} |S_{i}|\nonumber\\
			&= |h_{1}A|+\sum_{i=2}^{r} |S_{i}|\nonumber\\
			&\geq h_{1}(k-1)+1+\sum_{i=2}^{r} [(h_{i}-h_{i-1})(k-1)+1]\nonumber\\
			&= h_{r}(k-1)+r.
		\end{align}
		This proves (\ref{sumset-eqn1}).

		To see that the lower bound in (\ref{sumset-eqn1}) is optimal, let $A=[1, k]$ and $H=[1, r]$. Then $HA=[1, rk]$, and  hence $|HA|=rk$. This completes the proof of the theorem.
	\end{proof}
	
	\begin{remark}
		If $A$ contains nonnegative integers with $0 \in A$, then $HA=h_{r}A$, as $h_{i}A \subseteq h_{r}A$ for $i=1,2,\ldots,r-1$. Therefore, by Theorem \ref{theoremI}, we have $|HA| \geq h_{r}(k-1)+1$. Furthermore, this bound is optimal, and it can be seen by taking $A=[0, k-1]$ and $H=[1, r]$.
	\end{remark} 
	
	Now we prove the inverse result of Theorem \ref{sumset-thm}.
	
	\begin{theorem}\label{sumset-inverse-thm}
		Let $A$ be a set of $k\geq 2$ positive integers and $H$ be a set of $r \geq 2$ positive integers with $\max(H)=h_{r}$. If $|HA|=h_{r}(k-1)+r$, then $H$ is an arithmetic progression of difference $d$ and $A$ is an arithmetic progression of difference $d \cdot \min(A)$.
	\end{theorem}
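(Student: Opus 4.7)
The plan is to extract consequences of the equality $|HA|=h_{r}(k-1)+r$ by tracking where the inequalities in the proof of Theorem \ref{sumset-thm} must become equalities. Two conclusions follow at once: the union $HA = S_{1}\sqcup S_{2}\sqcup \cdots \sqcup S_{r}$ is disjoint, and each individual size equality $|h_{1}A|=h_{1}(k-1)+1$ and $|(h_{i}-h_{i-1})A|=(h_{i}-h_{i-1})(k-1)+1$ (for $2\le i\le r$) is attained.

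I would first show that $A$ is an arithmetic progression. If any of $h_{1}, h_{2}-h_{1}, \ldots, h_{r}-h_{r-1}$ is at least $2$, the corresponding size equality together with the inverse part of Theorem \ref{theoremI} gives the conclusion. The only case left out is $H=[1,r]$. Here each $S_{i}=A+(i-1)a_{k}$ has size $k$, and since $2A\subseteq[2a_{1},2a_{k}]$ cannot meet $S_{3},\ldots,S_{r}$, we have $2A\subseteq S_{1}\cup S_{2}$, a set of size $2k$. The extremal possibility $|2A|=2k$ would force $2A=A\cup (A+a_{k})$ and hence $a_{1}\in 2A$, which is impossible since $\min 2A=2a_{1}>a_{1}$. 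Therefore $|2A|=2k-1$ and Theorem \ref{theoremI} again delivers that $A$ is an AP.

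Writing $A=\{a_{1}, a_{1}+d', \ldots, a_{1}+(k-1)d'\}$, the main step is to prove by induction on $i$ that $(h_{i}-h_{i-1})a_{1}=d'$ for every $i=2,\ldots,r$. This conclusion gives $h_{i}-h_{i-1}=d:=d'/a_{1}$ for all such $i$, so $H$ is an AP of common difference $d$ and the common difference of $A$ equals $d\cdot a_{1}$, as claimed. For the base case $i=2$: since $h_{2}a_{1}<\min S_{2}$, the element $h_{2}a_{1}\in HA$ must lie in $S_{1}$, so $h_{2}a_{1}=h_{1}a_{1}+md'$ with $m\in\{1,\ldots,h_{1}(k-1)\}$; the $h_{1}(k-1)$ elements of $h_{2}A$ that sit below $S_{2}$ form the AP $\{h_{2}a_{1}+jd':0\le j\le h_{1}(k-1)-1\}$, which must be contained in $S_{1}$, and this forces $m=1$. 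For the inductive step, the hypothesis makes every gap between consecutive $S_{\ell}$'s (for $1\le\ell<i-1$) equal to $d'$, so $\bigsqcup_{\ell<i}S_{\ell}$ is a single AP of step $d'$ running from $h_{1}a_{1}$ to $h_{i-1}a_{k}$. Writing $h_{i}a_{1}=h_{1}a_{1}+Md'$, the inequality $h_{i}a_{1}>h_{i-1}a_{1}=h_{1}a_{1}+(i-2)d'$ gives $M\ge i-1$, while the requirement that the bottom $h_{i-1}(k-1)$ elements of $h_{i}A$ fit inside this AP gives $M\le i-1$; hence $M=i-1$ and $(h_{i}-h_{i-1})a_{1}=d'$.

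The main obstacle I anticipate is the inductive step, where one has to locate $h_{i}a_{1}$ exactly inside $\bigsqcup_{\ell<i}S_{\ell}$. This relies crucially on the collapse of this disjoint union into a single AP under the inductive hypothesis; without the collapse, some gap between $S_{\ell}$ and $S_{\ell+1}$ would exceed $d'$ and the AP $h_{i}A$ could not thread through the $S_{\ell}$'s in the required way.
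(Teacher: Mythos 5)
Your proof is correct, but it follows a genuinely different route from the paper's in both halves. To show $A$ is an arithmetic progression, the paper splits only on $h_{1}>1$ versus $h_{1}=1$, and in the latter case builds the explicit $k$-element set $S=\{a_{1}, h_{2}a_{1}, (h_{2}-1)a_{1}+a_{2}, \ldots, (h_{2}-1)a_{1}+a_{k-1}\}\subseteq HA$ lying below $\min(S_{2})$, forces $S=S_{1}=A$, and reads off the common difference $(h_{2}-1)a_{1}$ directly; you instead exploit the block-size equalities from (\ref{sumset-eqn2}) via the inverse part of Theorem \ref{theoremI} whenever some gap of $H$ is at least $2$, and isolate the single remaining case $H=[1,r]$, which you settle by sandwiching $2A$ inside $S_{1}\cup S_{2}$ and ruling out $|2A|=2k$ because $a_{1}\notin 2A$ --- a clean and correct reduction. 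To show $H$ is an arithmetic progression, the paper uses a one-shot squeeze: the element $(h_{i+1}-h_{i})a_{1}+a_{k-1}+(h_{i}-1)a_{k}$ of $h_{i+1}A$ lies strictly between $\max\nolimits_{-}(S_{i})$ and $\min(S_{i+1})$, hence must equal $\max(S_{i})=h_{i}a_{k}$, yielding $a_{k}-a_{k-1}=(h_{i+1}-h_{i})a_{1}$ for every $i$ simultaneously (and, as a bonus, identifying the common difference of $A$ without first knowing $A$ is an AP). Your induction locating $h_{i}a_{1}$ inside the single AP formed by the earlier blocks is also valid --- I checked that the bounds $M\geq i-1$ and $M\leq i-1$ both go through --- but it is longer, depends on having already established that $A$ is an AP, and needs the collapse of $\bigcup_{\ell<i}S_{\ell}$ into one progression at each stage, whereas the paper's squeeze needs none of this. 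In short: your first half is arguably more systematic in how it leans on the inverse theorem; the paper's second half is the more economical argument.
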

	
	\begin{proof}
		Let $A=\{a_{1},a_{2},\ldots,a_{k}\}$ and $H=\{h_{1},h_{2},\ldots,h_{r}\}$, where $0<a_{1}<a_{2}<\cdots<a_{k}$ and $0<h_{1}<h_{2}<\cdots<h_{r}$. Let $|HA|=h_{r}(k-1)+r$. Then the sumset $HA$ contains precisely the elements of the sets $S_{i}$ for $i=1, \ldots, r$, which are defined in (\ref{set1}), (\ref{seti}).

		First, we show that $A$ is an arithmetic progression. Observe that the assumption $|HA|=h_{r}(k-1)+r$ together with (\ref{sumset-eqn2}) implies $|h_{1}A|=h_{1}(k-1)+1$. If $h_{1}>1$, then from Theorem \ref{theoremI}, it follows that the set $A$ is an arithmetic progression. So, let $h_{1}=1$. Then
		\[S_{1}=h_{1}A=A=\{a_{1},a_{2},\ldots,a_{k}\}.\]
		Set \[S:=\{a_{1}, h_{2}a_{1}, (h_{2}-1)a_{1}+a_{2}, \ldots, (h_{2}-1)a_{1}+a_{k-1}\}.\]
		Clearly $S \subseteq HA$ and $\max(S)=(h_{2}-1)a_{1}+a_{k-1}<(h_{2}-1)a_{1}+a_{k}=\min(S_{2})$. Thus $S=S_{1}$. In other words, $(h_{2}-1)a_{1}+a_{i-1}=a_{i}$ for $i=2,3,\ldots,k$. Equivalently, $a_{i}-a_{i-1}=(h_{2}-1)a_{1}$ for $i=2,3,\ldots,k$. Hence, $A$ is an arithmetic progression.

		Next we show that $H$ is an arithmetic progression. For $i=1,2, \ldots,r-1$, consider the integers $(h_{i+1}-h_{i})a_{1}+a_{k-1}+(h_{i}-1)a_{k}$. Clearly
		\begin{multline*}
			\max\nolimits_{-}(S_{i})=a_{k-1}+(h_{i}-1)a_{k}<(h_{i+1}-h_{i})a_{1}+a_{k-1}+(h_{i}-1)a_{k}\\
			<(h_{i+1}-h_{i})a_{1}+h_{i}a_{k}=\min(S_{i+1}).
		\end{multline*}
		But we already have
		\begin{align*}
			\max\nolimits_{-}(S_{i})=a_{k-1}+(h_{i}-1)a_{k}<h_{i}a_{k}=\max(S_{i})<(h_{i+1}-h_{i})a_{1}+h_{i}a_{k}=\min(S_{i+1}).
		\end{align*}		
		Thus
		\[(h_{i+1}-h_{i})a_{1}+a_{k-1}+(h_{i}-1)a_{k}=h_{i}a_{k} \ \text{for} \ i=1,2, \ldots,r-1.\]
		This implies
		\begin{equation}\label{sumset-eqn3}
			a_{k}-a_{k-1}=(h_{i+1}-h_{i})a_{1} \ \text{for} \ i=1,2, \ldots,r-1.
		\end{equation}
		Therefore
		\[h_{2}-h_{1} = h_{3}-h_{2} = \cdots = h_{r}-h_{r-1},\]
		and hence the set $H$ is an arithmetic progression. Furthermore, by (\ref{sumset-eqn3}), the set $A$ is an arithmetic progression of difference $(h_{i+1}-h_{i})a_{1}$. This completes the proof of the theorem.
	\end{proof}

	\section{Restricted sumset}\label{sec3}
	
	\begin{theorem}\label{restricted-sumset-thm}
		Let $A$ be a set of $k$ positive integers and $H=\{h_{1}, h_{2}, \ldots, h_{r}\}$ be a set of positive integers with $h_{1}<h_{2}<\cdots<h_{r} \leq k$. Set $h_{0}=0$. Then
		\begin{equation}\label{restricted-eqn1}
			|H\,\hat{}A| \geq \sum_{i=1}^{r} (h_{i}-h_{i-1}) (k-h_{i})+r.
		\end{equation}
		This lower bound is optimal.
	\end{theorem}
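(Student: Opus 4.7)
The plan is to mirror the proof of Theorem~\ref{sumset-thm}: I would construct pairwise disjoint subsets $T_1,\ldots,T_r$ of $H\,\hat{}A$ with $T_i\subseteq h_i\,\hat{}A$ and $|T_i|\geq (h_i-h_{i-1})(k-h_i)+1$, and then sum their cardinalities. Writing $A=\{a_1,a_2,\ldots,a_k\}$ with $a_1<a_2<\cdots<a_k$, I would set $T_1:=h_1\,\hat{}A$ and, for $i\geq 2$,
\[
T_i := (h_i-h_{i-1})\,\hat{}\,\{a_1,\ldots,a_{k-h_{i-1}}\}\;+\;\bigl(a_{k-h_{i-1}+1}+\cdots+a_k\bigr),
\]
so that every element of $T_i$ is a sum of $h_i$ distinct terms of $A$ whose top $h_{i-1}$ summands are fixed as the largest elements of $A$ and whose bottom $h_i-h_{i-1}$ summands are chosen freely from $\{a_1,\ldots,a_{k-h_{i-1}}\}$. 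The hypothesis $h_i\leq k$ guarantees $h_i-h_{i-1}\leq k-h_{i-1}$, so Theorem~\ref{theoremII} applied to the $(k-h_{i-1})$-element set $\{a_1,\ldots,a_{k-h_{i-1}}\}$ immediately delivers the required bound $|T_i|\geq (h_i-h_{i-1})(k-h_i)+1$.

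The only genuine obstacle is verifying pairwise disjointness of the $T_i$, but this reduces to a short computation. I would observe that
\[
\max(T_i)=a_{k-h_i+1}+\cdots+a_k,\qquad \min(T_{i+1})=a_1+\cdots+a_{h_{i+1}-h_i}+a_{k-h_i+1}+\cdots+a_k,
\]
so that $\min(T_{i+1})-\max(T_i)=a_1+\cdots+a_{h_{i+1}-h_i}>0$. Hence the $T_i$ have strictly increasing ranges and form pairwise disjoint subsets of $H\,\hat{}A$; summing over $i$ yields the inequality~(\ref{restricted-eqn1}).

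For optimality, I would exhibit the pair $A=[1,k]$ and $H=[1,r]$. A direct computation gives $h\,\hat{}[1,k]=[h(h+1)/2,\,h(2k-h+1)/2]$, and the hypothesis $r\leq k$ ensures that for $h=1,\ldots,r-1$ consecutive such intervals overlap, so $H\,\hat{}A$ collapses to the single interval $[1,\,r(2k-r+1)/2]$ of size $rk-r(r-1)/2$. On the other hand, the right-hand side of~(\ref{restricted-eqn1}) specializes to $\sum_{i=1}^r (k-i)+r=rk-r(r-1)/2$, matching exactly and confirming that the lower bound is attained.
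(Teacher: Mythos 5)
Your proposal is correct and follows essentially the same route as the paper: your $T_i$ coincide with the paper's $S_i$ (since $\max(h_{i-1}\,\hat{}A)=a_{k-h_{i-1}+1}+\cdots+a_k$), the cardinality bound comes from the same application of Theorem~\ref{theoremII} to $\{a_1,\ldots,a_{k-h_{i-1}}\}$, and the optimality example $A=[1,k]$, $H=[1,r]$ is the one the paper uses. The only cosmetic difference is that you compute the union $H\,\hat{}A$ explicitly as an interval, whereas the paper just bounds $|H\,\hat{}A|$ from above by $rk-\tfrac{r(r-1)}{2}$ and invokes the lower bound to force equality.
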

	
	\begin{proof}
		Let $A=\{a_{1},a_{2},\ldots,a_{k}\}$, where $a_{1}<a_{2}<\cdots<a_{k}$. Set
		\begin{equation}\label{reset1}
			S_{1} := h_{1}\,\hat{}A
		\end{equation}
		and
		\begin{equation}\label{reseti}
			S_{i} := (h_{i}-h_{i-1})\,\hat{}A_{i}+\max(h_{i-1}\,\hat{}A),
		\end{equation}
		for $i=2,3,\ldots,r$, where $A_{i}=\{a_{1},a_{2},\ldots,a_{k-h_{i-1}}\}$. Clearly $S_{i} \subseteq h_{i}\,\hat{}A$ for $i=1,2,\ldots,r$, and $\max(S_{i}) < \min(S_{i+1})$ for $i=1,2,\ldots,r-1$. Therefore $S_{1}, S_{2}, \ldots, S_{r}$ are pairwise disjoint subsets of $H\,\hat{}A$. Hence, by Theorem \ref{theoremII}, we have
		\begin{align}\label{restricted-eqn2}
			|H\,\hat{}A| &\geq \sum_{i=1}^{r} |S_{i}|\nonumber\\
			&= |h_{1}\,\hat{}A|+\sum_{i=2}^{r} |S_{i}|\nonumber\\
			&\geq h_{1}(k-h_{1})+1+\sum_{i=2}^{r} [(h_{i}-h_{i-1})(k-h_{i})+1]\nonumber\\
			&= \sum_{i=1}^{r} (h_{i}-h_{i-1})(k-h_{i})+r.
		\end{align}
		This proves (\ref{restricted-eqn1}).

		Next to see that the lower bound in (\ref{restricted-eqn1}) is optimal, let $A=[1, k]$ and $H=[1, r]$ with $r \leq k$. Then $H\,\hat{}A \subseteq [1, k+(k-1)+\cdots+(k-r+1)]$. Therefore  $|H\,\hat{}A| \leq rk-\frac{r(r-1)}{2}$. This together with (\ref{restricted-eqn1}) implies $|H\,\hat{}A|=rk-\frac{r(r-1)}{2}$, and hence completes the proof of the theorem.
	\end{proof}
	
	As a consequence of Theorem \ref{restricted-sumset-thm}, we obtain the following corollary.
	
	\begin{corollary}\label{restricted-sumset-nonve-cor}
		Let $A$ be a set of $k$ nonnegative integers with $0 \in A$. Let $H=\{h_{1}, h_{2}, \ldots, h_{r}\}$ be a set of positive integers with $h_{1}<h_{2}<\cdots<h_{r} \leq k-1$. Set $h_{0}=0$. Then
		\begin{equation}\label{restricted-eqn3}
			|H\,\hat{}A| \geq \sum_{i=1}^{r} (h_{i}-h_{i-1}) (k-h_{i}-1)+h_{1}+r.
		\end{equation}
		This lower bound is optimal.
	\end{corollary}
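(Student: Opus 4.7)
My plan is to reduce the claim to Theorem \ref{restricted-sumset-thm} by peeling off the element $0$. I would set $B := A \setminus \{0\}$, a set of $k-1$ positive integers. Because $h_r \leq k-1 = |B|$, Theorem \ref{restricted-sumset-thm} applied to $B$ and $H$ gives
\begin{equation*}
|H\,\hat{}B| \geq \sum_{i=1}^{r} (h_i - h_{i-1})(k - h_i - 1) + r.
\end{equation*}
Since $B \subset A$ immediately yields $H\,\hat{}B \subseteq H\,\hat{}A$, the task reduces to exhibiting $h_1$ elements of $H\,\hat{}A$ that lie outside $H\,\hat{}B$; adding these to the above inequality produces exactly the bound (\ref{restricted-eqn3}).

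The key step is constructing these $h_1$ extra elements, and the idea is to exploit the fact that adjoining $0 \in A$ to any $(h_1-1)$-subset of $B$ realizes an element of $h_1\,\hat{}A$. Writing $A = \{0, a_2, \ldots, a_k\}$ with $0 < a_2 < \cdots < a_k$ (and $h_1 \leq h_r \leq k-1$, so $a_{h_1+1}$ exists), I would take
\begin{equation*}
T = \Bigl\{\, \sum_{j=2}^{h_1+1} a_j - a_\ell : \ell = 2, 3, \ldots, h_1+1 \,\Bigr\},
\end{equation*}
namely the $\binom{h_1}{h_1-1} = h_1$ sums of all $(h_1-1)$-subsets of $\{a_2, \ldots, a_{h_1+1}\}$. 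These sums are pairwise distinct (since the $a_\ell$ are), each lies in $(h_1-1)\,\hat{}B$, and adjoining $0$ exhibits each as an $h_1$-subset sum of distinct elements of $A$, so $T \subseteq h_1\,\hat{}A \subseteq H\,\hat{}A$.

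It remains to verify $T \cap H\,\hat{}B = \emptyset$. This follows because $\min(H\,\hat{}B) = \min(h_1\,\hat{}B) = a_2 + a_3 + \cdots + a_{h_1+1}$, whereas every element of $T$ is strictly smaller than this (since each $a_\ell > 0$). Combining,
\begin{equation*}
|H\,\hat{}A| \geq |H\,\hat{}B| + |T| = |H\,\hat{}B| + h_1 \geq \sum_{i=1}^{r}(h_i-h_{i-1})(k-h_i-1) + h_1 + r.
\end{equation*}

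Finally, for optimality I would check $A = [0, k-1]$ and $H = [1, r]$ (with $r \leq k-1$): since $A$ is an arithmetic progression of difference $1$, each $h\,\hat{}A$ fills the interval $[\binom{h}{2}, hk - \binom{h+1}{2}]$, and consecutive such intervals overlap (as $h+1 \leq k$), so $H\,\hat{}A = [0, rk - \binom{r+1}{2}]$ has cardinality $rk - \binom{r+1}{2} + 1$, which matches the right-hand side of (\ref{restricted-eqn3}) under $h_i - h_{i-1} = 1$. I do not anticipate serious difficulty: the only place that truly uses $0 \in A$ is the construction of $T$, and the uniform inequality $\max T < \min(H\,\hat{}B)$ makes the disjointness from $H\,\hat{}B$ routine.
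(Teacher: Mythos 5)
Your proposal is correct and follows essentially the same route as the paper: peel off $0$, apply Theorem \ref{restricted-sumset-thm} to $A\setminus\{0\}$, and adjoin the $h_{1}$ sums of the $(h_{1}-1)$-element subsets of the smallest $h_{1}$ positive elements (each completed by $0$), all of which lie below $\min(H\,\hat{}(A\setminus\{0\}))$. Your set $T$ coincides with the paper's $s_{1},\ldots,s_{h_{1}}$ (and uniformly covers the $h_{1}=1$ case, which the paper treats separately via $\{0\}$), and the optimality example is the same.
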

	
	\begin{proof}
		Let $A=\{0, a_{1}, a_{2}, \ldots, a_{k-1}\}$, where $0<a_{1}<a_{2}<\cdots<a_{k-1}$. Set $A^{\prime}=A\setminus \{0\}$. For $i=1,2,\ldots,h_{1}$, let 
		\[ s_{i} = \sum_{j=1,\, j \neq h_{1}-i+1}^{h_{1}} a_{j}. \]
		Then it is easy to see that $\{0\} \cup H\,\hat{}A^{\prime}  \subseteq H\,\hat{}A$ if $h_{1} = 1$ and $\{s_{1}, s_{2}, \ldots, s_{h_{1}}\} \cup H\,\hat{}A^{\prime} \subseteq H\,\hat{}A$ if $h_{1}> 1$, where $s_{1} < s_{2} < \cdots < s_{h_{1}} < \min(H\,\hat{}A^{\prime})$. So, by Theorem \ref{restricted-sumset-thm}, we get
		\begin{equation}\label{corrected-eqn}
			|H\,\hat{}A| 
			\geq |H\,\hat{}A^{\prime}|+h_{1} 
			\geq \sum_{i=1}^{r} (h_{i}-h_{i-1}) (k-h_{i}-1)+h_{1}+r.
		\end{equation}
		
		Furthermore, the optimality of the lower bound in (\ref{restricted-eqn3}) can be verified by taking $A=[0, k-1]$ and $H=[1, r]$, where $k, r$ are positive integers with $r \leq k-1$.
	\end{proof}

	The following result (which has recently been proved) is a particular case of Theorem \ref{restricted-sumset-thm} and Corollary \ref{restricted-sumset-nonve-cor}.
	
	\begin{corollary} {\cite[Theorem 2.1, Corollary 2.1]{bhanja}}
		Let $A$ be a set of $k$ nonnegative integers and $H=[0, r]$ with $r \leq k$. If $0 \notin A$, then
		\begin{equation*}
			|H\,\hat{}A| \geq rk-\frac{r(r-1)}{2}+1.
		\end{equation*}
		If $0 \in A$ and $r \leq k-1$, then
		\begin{equation*}
			|H\,\hat{}A| \geq rk-\frac{r(r+1)}{2}+1.
		\end{equation*}
		These lower bounds are optimal. 	
	\end{corollary}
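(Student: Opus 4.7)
The plan is to derive both inequalities directly from Theorem \ref{restricted-sumset-thm} and Corollary \ref{restricted-sumset-nonve-cor} by plugging in $H = [1,r]$, after separating off the contribution of $0 \in H$. The key decomposition is
\[ H\,\hat{}A \;=\; 0\,\hat{}A \cup [1,r]\,\hat{}A \;=\; \{0\} \cup [1,r]\,\hat{}A, \]
so the analysis reduces to counting $[1,r]\,\hat{}A$ and deciding whether $0$ is an extra element.

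For the first case ($0 \notin A$), every element of $[1,r]\,\hat{}A$ is a sum of at least one positive integer, hence positive, so $0 \notin [1,r]\,\hat{}A$. Applying Theorem \ref{restricted-sumset-thm} with the set $[1,r]$, whose elements are $h_i = i$ with $h_0 = 0$, I would evaluate the telescoping sum
\[ \sum_{i=1}^{r}(h_i - h_{i-1})(k - h_i) + r \;=\; \sum_{i=1}^{r}(k-i) + r \;=\; rk - \tfrac{r(r-1)}{2}, \]
and then add $1$ for the isolated element $0$, yielding the claimed bound $rk - r(r-1)/2 + 1$.

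For the second case ($0 \in A$, $r \leq k-1$), the element $0$ is not a bonus: since $1\,\hat{}A = A \ni 0$, we already have $0 \in [1,r]\,\hat{}A$, so $|H\,\hat{}A| = |[1,r]\,\hat{}A|$. I would apply Corollary \ref{restricted-sumset-nonve-cor} with $H' = [1,r]$ (the hypothesis $h_r \leq k-1$ is exactly $r \leq k-1$) and simplify
\[ \sum_{i=1}^{r}(h_i - h_{i-1})(k - h_i - 1) + h_1 + r \;=\; \sum_{i=1}^{r}(k - i - 1) + 1 + r \;=\; rk - \tfrac{r(r+1)}{2} + 1, \]
giving the second inequality. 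Optimality in both cases is verified by the standard extremal example $A = [1,k]$ (when $0 \notin A$) or $A = [0,k-1]$ (when $0 \in A$), combined with the corresponding optimality statements already established in Theorem \ref{restricted-sumset-thm} and Corollary \ref{restricted-sumset-nonve-cor}.

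There is no real obstacle here beyond bookkeeping: the only subtle point is remembering to add the $+1$ for $\{0\}$ in the first case but not in the second, which hinges on whether $0$ already lies in $[1,r]\,\hat{}A$. Once that dichotomy is handled, the proof is a short telescoping computation.
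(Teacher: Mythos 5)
Your proposal is correct and follows exactly the route the paper intends: the corollary is stated there as "a particular case of Theorem \ref{restricted-sumset-thm} and Corollary \ref{restricted-sumset-nonve-cor}," and your specialization to $H'=[1,r]$, the telescoping computations, and the observation that the extra element $0$ contributes $+1$ only when $0\notin A$ are precisely the omitted bookkeeping. No discrepancies to report.
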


	Now we prove the inverse theorem of Theorem \ref{restricted-sumset-thm}.
	
	\begin{theorem}\label{restricted-sumset-inverse-thm}
		Let $A$ be a set of $k\geq 6$ positive integers. Let $H=\{h_{1}, h_{2}, \ldots, h_{r}\}$ be a set of $r\geq 2$ positive integers with $h_{1}<h_{2}<\cdots<h_{r}\leq k-1$. Set $h_{0}=0$. If
		\[|H\,\hat{}A|=\sum_{i=1}^{r} (h_{i}-h_{i-1}) (k-h_{i})+r,\]
		then $H=h_{1}+[0, r-1]$ and $A = \min(A) \cdot [1, k]$.
	\end{theorem}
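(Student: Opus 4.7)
The plan is to emulate the proof of Theorem~\ref{sumset-inverse-thm}, adjusted to the restricted setting. Write $A=\{a_1<\cdots<a_k\}$ and $H=\{h_1<\cdots<h_r\}$. The equality hypothesis combined with the chain (\ref{restricted-eqn2}) forces simultaneously the exact disjoint decomposition $H\,\hat{}A=S_1\sqcup\cdots\sqcup S_r$ and, for each $i$, the saturation equality $|S_i|=(h_i-h_{i-1})(k-h_i)+1$. Unpacking this second fact gives $|h_1\,\hat{}A|=h_1(k-h_1)+1$ and, for $i\geq 2$, $|(h_i-h_{i-1})\,\hat{}A_i|=(h_i-h_{i-1})(|A_i|-(h_i-h_{i-1}))+1$, where $A_i=\{a_1,\ldots,a_{k-h_{i-1}}\}$. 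Each such equality is precisely the extremal case of Theorem~\ref{theoremII} on a smaller ground set, so its inverse part identifies $A_i$ as an arithmetic progression whenever $|A_i|\geq 5$ and $2\leq h_i-h_{i-1}\leq |A_i|-2$.

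The first step is to show that $A$ itself is an arithmetic progression. If $h_1\geq 2$, Theorem~\ref{theoremII} applied to $h_1\,\hat{}A$ does this at once, since $k\geq 6$ and $h_1\leq h_r-1\leq k-2$. When $h_1=1$, the block $S_1=A$ is structurally uninformative, and one applies Theorem~\ref{theoremII} instead to $A_2=A\setminus\{a_k\}$ through the equality $|(h_2-1)\,\hat{}A_2|=(h_2-1)(k-h_2)+1$; its hypotheses hold as soon as $3\leq h_2\leq k-2$. The position of $a_k$ is then recovered by exhibiting a restricted $h_2$-sum involving $a_k$ whose value is forced, via the partition $\sqcup_j S_j$, to continue the progression of $A_2$---directly analogous to the auxiliary set $S$ constructed in the proof of Theorem~\ref{sumset-inverse-thm}. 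The corner subcases $h_2=2$ (where $S_2$ is trivial) and $h_2=k-1$ with $r=2$ are handled either by running the same template on some later $S_i$ or, when no $S_i$ is usable through Theorem~\ref{theoremII}, by producing an ad hoc element of some $h\,\hat{}A$ that cannot lie in $\sqcup_j S_j$ unless $A$ is already an AP; the hypothesis $k\geq 6$ leaves enough room for this.

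Once $A=\{a_1,a_1+d,\ldots,a_1+(k-1)d\}$ is known to be an arithmetic progression, the remaining step pins down both $d$ and the jumps of $H$. A short calculation gives $\max S_i=\max(h_i\,\hat{}A)$, $\max S_i-\max\nolimits_{-}(S_i)=a_{k-h_i+1}-a_{k-h_i}=d$, and $\min S_{i+1}-\max S_i=a_1+\cdots+a_{h_{i+1}-h_i}$. Following the concluding paragraph of the proof of Theorem~\ref{sumset-inverse-thm}, for each $i=1,\ldots,r-1$ I would exhibit suitable restricted sums---for instance one of the form $a_1+a_{k-h_i}+a_{k-h_i+2}+\cdots+a_k\in h_{i+1}\,\hat{}A$, together with a dual construction bounding $d$ from above---whose exact location inside the partition $\sqcup_j S_j$ is rigid. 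The resulting identities simultaneously force $h_{i+1}-h_i=1$ and $d=a_1$, yielding $H=h_1+[0,r-1]$ and $A=a_1\cdot[1,k]$. The principal obstacle is Step~1 in the corner cases with $h_1=1$ and $h_2\in\{2,k-1\}$, where no $S_i$ is directly usable through Theorem~\ref{theoremII}; resolving them requires constructing restricted sums by hand and carefully tracking which block of the partition they land in.
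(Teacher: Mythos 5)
Your outline reproduces the paper's proof almost step for step: the same exact decomposition $H\,\hat{}A=S_{1}\sqcup\cdots\sqcup S_{r}$ with each block saturated, the same case split on $h_{1}$ and then on $h_{2}$ when $h_{1}=1$, the same application of Theorem~\ref{theoremII} to $A_{2}=\{a_{1},\ldots,a_{k-1}\}$ when $h_{2}\geq 3$ followed by a forced restricted sum that places $a_{k}$ on the progression, and the same concluding step: the element $a_{1}+\cdots+a_{h_{i+1}-h_{i}}+a_{k-h_{i}}+a_{k-h_{i}+2}+\cdots+a_{k}$ of $h_{i+1}\,\hat{}A$ lies strictly between $\max\nolimits_{-}(S_{i})$ and $\min(S_{i+1})$, hence must equal $\max(S_{i})$, giving $a_{2}-a_{1}=a_{1}+\cdots+a_{h_{i+1}-h_{i}}$, which by positivity of the $a_{j}$ forces $h_{i+1}-h_{i}=1$ and $d=a_{1}$ in one stroke (no separate ``dual construction bounding $d$ from above'' is needed).

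The one genuine gap is the subcase $h_{1}=1$, $h_{2}=2$, which you defer to ``an ad hoc element'': since no block is usable through Theorem~\ref{theoremII} there (e.g., when $H=[1,r]$), that ad hoc construction \emph{is} the content of the case, and a complete proof must exhibit it. The paper does so by taking $T_{1}=\{a_{1},a_{2},a_{1}+a_{2},a_{1}+a_{3},\ldots,a_{1}+a_{k-1}\}\subseteq 1\,\hat{}A\cup 2\,\hat{}A$; every element of $T_{1}$ is less than $a_{1}+a_{k}=\min(S_{2})$, so $T_{1}\subseteq S_{1}=A$, and counting gives $T_{1}=A$, i.e., $a_{i}-a_{i-1}=a_{1}$ for $i\geq 3$; finally $a_{2}+a_{k-1}$ lies strictly between $\max(S_{1})$ and the second smallest element of $S_{2}$, so it must equal $\min(S_{2})=a_{1}+a_{k}$, which yields $a_{2}-a_{1}=a_{k}-a_{k-1}$. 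You should supply this (or an equivalent) argument rather than assert its existence. Your other flagged corner, $h_{1}=1$ with $h_{2}=k-1$ (hence $r=2$), is a legitimate concern: there $h_{2}-h_{1}=k-2=|A_{2}|-1$ falls outside the range $2\leq h\leq |A_{2}|-2$ of the inverse part of Theorem~\ref{theoremII}, and the sums $a_{1}+\cdots+a_{h_{2}-1}+a_{k-2}$ used afterwards would repeat the summand $a_{k-2}$; the paper's own proof passes over this subcase silently, so an additional argument working directly with $S_{1}$ and $S_{2}$ is required there in any complete treatment, yours or the paper's.
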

	
	\begin{proof}
		Let $A=\{a_{1},a_{2},\ldots,a_{k}\}$, where $0<a_{1}<a_{2}<\cdots<a_{k}$. Let $|H\,\hat{}A|=\sum_{i=1}^{r} (h_{i}-h_{i-1}) (k-h_{i})+r$. Then the sumset $H\,\hat{}A$ contains precisely the elements of the sets $S_{i}$ for $i=1, \ldots, r$, which are defined in (\ref{reset1}), (\ref{reseti}).

		First, we show that $A$ is an arithmetic progression. Since $|H\,\hat{}A| = \sum_{i=1}^{r} (h_{i}-h_{i-1}) (k-h_{i})+r$, from (\ref{restricted-eqn2}), it follows that $|h_{1}\,\hat{}A| = h_{1}(k-h_{1})+1$. If $h_{1}\geq 2$, then by Theorem \ref{theoremII}, the set $A$ is an arithmetic progression. Therefore, let $h_{1}=1$.
		
		If $h_{2}\geq 3$, then $h_{2}-h_{1}\geq 2$. By (\ref{restricted-eqn2}), we get $|S_{2}|=|(h_{2}-h_{1})\,\hat{}A_{2}|=(h_{2}-h_{1})(k-h_{2})+1$, where $A_{2}=\{a_{1},a_{2},\ldots,a_{k-1}\}$. Therefore, by Theorem \ref{theoremII}, the set $A_{2}$ is an arithmetic progression. To show that $A$ is an arithmetic progression, it is left to show that $a_{k}-a_{k-1}=a_{k-1}-a_{k-2}$. Consider the following integers:
		\begin{multline*}
			a_{k-2} < a_{1}+\cdots+a_{h_{2}-1}+a_{k-2} < a_{1}+\cdots+a_{h_{2}-1}+a_{k-1} < a_{1}+\cdots+a_{h_{2}-1}+a_{k}\\
			= \min(S_{2}).
		\end{multline*}	
		But we already have
		\[a_{k-2}<a_{k-1}<a_{k}<a_{1}+\cdots+a_{h_{2}-1}+a_{k}=\min(S_{2}),\]
		where $\{a_{k-2}, a_{k-1}, a_{k}\} \subseteq S_{1}$.
		Thus
		\[a_{1}+\cdots+a_{h_{2}-1}+a_{k-2}=a_{k-1} \ \text{and} \ a_{1}+\cdots+a_{h_{2}-1}+a_{k-1}=a_{k}.\]
		This implies
		\[a_{k}-a_{k-1}=a_{1}+\cdots+a_{h_{2}-1}=a_{k-1}-a_{k-2},\]
		and we are done.

		Now let $h_{2}=2$; i.e., $h_{2}-h_{1}=1$. Set $T_{1} := \{a_{1}, a_{2}, a_{1}+a_{2}, a_{1}+a_{3}, \ldots, a_{1}+a_{k-1}\}$.
		Clearly $T_{1} \subseteq h_{1}\,\hat{}A \cup h_{2}\,\hat{}A \subseteq H\,\hat{}A$ and $\max(T_{1})=a_{1}+a_{k-1}<a_{1}+a_{k}=\min(S_{2})$. Therefore $T_{1}=S_{1}$. That is 
		\[\{a_{1}, a_{2}, a_{1}+a_{2}, a_{1}+a_{3}, \ldots, a_{1}+a_{k-1}\}=\{a_{1}, a_{2}, a_{3}, \ldots, a_{k}\}.\]
		Thus $a_{i}=a_{1}+a_{i-1}$ for $i=3,4,\ldots, k$. Equivalently, $a_{i}-a_{i-1}=a_{1}$ for $i=3,4,\ldots, k$. To show that $A$ is an arithmetic progression it is enough to show $a_{2}-a_{1}=a_{k}-a_{k-1}$. Consider the integer $a_{2}+a_{k-1}$. Since $\max(S_{1})=\max(T_{1})=a_{1}+a_{k-1}<a_{2}+a_{k-1}<a_{2}+a_{k}=\min\nolimits^{+} (S_{2})$ and $\max(S_{1})=\max(T_{1})=a_{1}+a_{k-1}<a_{1}+a_{k}=\min(S_{2})<a_{2}+a_{k}=\min\nolimits^{+} (S_{2})$, we must have $a_{2}+a_{k-1}=a_{1}+a_{k}$. This proves $A$ is an arithmetic progression.

		Next we show that $H$ is an arithmetic progression. For $i=1,2, \ldots,r-1$, consider the following integers:
		\begin{multline*}
			\max\nolimits_{-} (S_{i})
			=a_{k-h_{i}}+a_{k-h_{i}+2}+\cdots+a_{k} < a_{k-h_{i}+1}+\cdots+a_{k}=\max(S_{i}) \\
			< a_{1}+\cdots+a_{h_{i+1}-h_{i}}+a_{k-h_{i}+1}+\cdots+a_{k}=\min(S_{i+1})
		\end{multline*}	
		and
		\begin{multline*}
			\max\nolimits_{-} (S_{i})
			=a_{k-h_{i}}+a_{k-h_{i}+2}+\cdots+a_{k} < a_{1}+\cdots+a_{h_{i+1}-h_{i}}+a_{k-h_{i}}+a_{k-h_{i}+2}+\cdots+a_{k}\\
			< a_{1}+\cdots+a_{h_{i+1}-h_{i}}+a_{k-h_{i}+1}+\cdots+a_{k}=\min(S_{i+1}).
		\end{multline*}
		Therefore \[a_{k-h_{i}+1}+\cdots+a_{k}=a_{1}+\cdots+a_{h_{i+1}-h_{i}}+a_{k-h_{i}}+a_{k-h_{i}+2}+\cdots+a_{k}\]
		for $i=1,2, \ldots, r-1$. This implies
		\begin{equation*}\label{equation3.2}
			a_{k-h_{i}+1}-a_{k-h_{i}}=a_{1}+\cdots+a_{h_{i+1}-h_{i}} \ \text{for} \ i=1,2, \ldots,r-1.
		\end{equation*}
		Since $A$ is an arithmetic progression, the difference between any two consecutive elements in $A$ is same. Therefore \[a_{2}-a_{1}=a_{k-h_{i}+1}-a_{k-h_{i}}=a_{1}+a_{2}+\cdots+a_{h_{i+1}-h_{i}} \ \text{for} \ i=1,2, \ldots,r-1.\] 
		This holds, only if $h_{i+1}-h_{i}=1$ for $i=1,2, \ldots,r-1$. Hence, $H=h_{1}+[0, r-1]$ and $A=a_{1} \cdot [1, k]$. This completes the proof of the theorem.
		
	\end{proof}

	\begin{corollary}\label{restricted-sumset-inverse-cor}
		Let $A$ be a set of $k\geq 7$ nonnegative integers with $0 \in A$. Let $H=\{h_{1}, h_{2}, \ldots, h_{r}\}$ be a set of $r\geq 2$ positive integers with $h_{1}<h_{2}<\cdots<h_{r}\leq k-2$. Set $h_{0}=0$. If
		\[|H\,\hat{}A|=\sum_{i=1}^{r} (h_{i}-h_{i-1}) (k-h_{i}-1)+h_{1}+r,\]
		then $H=h_{1}+[0, r-1]$ and 
		$A=\min(A\setminus \{0\}) \cdot [0, k-1]$.
	\end{corollary}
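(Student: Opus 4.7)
The plan is to reduce the corollary to Theorem \ref{restricted-sumset-inverse-thm} applied to the set $A' := A \setminus \{0\}$, which is a set of $k' := k - 1$ positive integers. Since $k \geq 7$ we have $k' \geq 6$; since $h_r \leq k - 2 = k' - 1$ and $r \geq 2$, all hypotheses of Theorem \ref{restricted-sumset-inverse-thm} are satisfied for the pair $(A', H)$. The threshold $k \geq 7$ in the corollary is exactly calibrated so that $k' \geq 6$.

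The key observation is that the inequality chain (\ref{corrected-eqn}) from the proof of Corollary \ref{restricted-sumset-nonve-cor} reads
\[|H\,\hat{}A| \;\geq\; |H\,\hat{}A'| + h_1 \;\geq\; \sum_{i=1}^{r}(h_i - h_{i-1})(k - h_i - 1) + h_1 + r.\]
Under the extremality hypothesis of the corollary the two outer quantities coincide, so both inequalities collapse to equalities. In particular, the second one forces
\[|H\,\hat{}A'| \;=\; \sum_{i=1}^{r}(h_i - h_{i-1})(k' - h_i) + r,\]
which is precisely the extremal value from Theorem \ref{restricted-sumset-thm} applied to the positive set $A'$.

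Applying Theorem \ref{restricted-sumset-inverse-thm} to $A'$ and $H$ (its hypotheses are now all verified) then forces $H = h_1 + [0, r-1]$ and $A' = \min(A') \cdot [1, k-1]$. Since $A = \{0\} \cup A'$ and $\min(A') = \min(A \setminus \{0\})$, we recover $A = \min(A \setminus \{0\}) \cdot [0, k-1]$, as desired.

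I do not anticipate a serious obstacle: the argument is a direct reduction and borrows all of its real structural content from Theorem \ref{restricted-sumset-inverse-thm}. The only small point requiring care is the sandwich step — one must check that the equality hypothesis truly forces both inequalities in (\ref{corrected-eqn}) to be tight — but this is immediate from the coincidence of the extreme ends of the chain.
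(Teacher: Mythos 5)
Your proposal is correct and is essentially identical to the paper's own proof: both reduce to Theorem \ref{restricted-sumset-inverse-thm} applied to $A'=A\setminus\{0\}$ by using the chain (\ref{corrected-eqn}) to force $|H\,\hat{}A'|=\sum_{i=1}^{r}(h_i-h_{i-1})(k-1-h_i)+r$, and then translate the conclusion back to $A$. The hypothesis check ($k-1\geq 6$, $h_r\leq k-2$) that you spell out is exactly the calibration the paper relies on implicitly.
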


		\begin{proof}
		Let $A=\{0, a_{1}, a_{2}, \ldots, a_{k-1}\}$, where $0<a_{1}<a_{2}<\cdots<a_{k-1}$. Set $A^{\prime} = A \setminus \{0\}$. The equality $|H\,\hat{}A| = \sum_{i=1}^{r} (h_{i}-h_{i-1}) (k-h_{i}-1)+h_{1}+r$ together with (\ref{corrected-eqn}) implies $|H\,\hat{}A^{\prime}| = \sum_{i=1}^{r} (h_{i}-h_{i-1}) (k-1-h_{i})+r$. By applying Theorem \ref{restricted-sumset-inverse-thm} on $H$ and $A^{\prime}$, we obtain $H=h_{1}+[0, r-1]$ and $A^{\prime} = \min(A^{\prime}) \cdot [1, k-1]$. Hence, $H=h_{1}+[0, r-1]$ and $A=\min(A^{\prime}) \cdot [0, k-1]$. This completes the proof of the corollary.
	\end{proof}
		
		The following inverse result (which has recently been proved) is a particular case of Theorem \ref{restricted-sumset-inverse-thm} and Corollary \ref{restricted-sumset-inverse-cor}.
		
	\begin{corollary} {\cite[Theorem 2.2, Corollary 2.3]{bhanja}}
		Let $A$ be a set of $k\geq 7$ nonnegative integers and $H=[0, r]$ with $2 \leq  r \leq k-1$. If $0 \notin A$ and $|H\,\hat{}A| = rk-\frac{r(r-1)}{2}+1$, then $A = d \cdot [1, k]$ for some positive integer $d$.
		
		If $0 \in A$, $r \leq k-2$, and $|H\,\hat{}A| = rk-\frac{r(r+1)}{2}+1$, then $A = d \cdot [0, k-1]$ for some positive integer $d$.
	\end{corollary}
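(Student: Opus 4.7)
The plan is to reduce the corollary to Theorem \ref{restricted-sumset-inverse-thm} by stripping the zero element from $A$. Write $A = \{0\} \cup A'$, where $A' = A \setminus \{0\}$ is a set of $k-1$ positive integers. The inequality chain (\ref{corrected-eqn}) already established in the proof of Corollary \ref{restricted-sumset-nonve-cor} reads
\[|H\,\hat{}A| \geq |H\,\hat{}A'| + h_{1} \geq \sum_{i=1}^{r} (h_{i}-h_{i-1})(k-h_{i}-1) + h_{1} + r,\]
so the assumed equality forces both of these inequalities to be equalities. In particular
\[|H\,\hat{}A'| = \sum_{i=1}^{r} (h_{i}-h_{i-1})\bigl((k-1) - h_{i}\bigr) + r,\]
which is precisely the equality hypothesis of Theorem \ref{restricted-sumset-inverse-thm} applied to $A'$ with the parameter $k$ replaced by $k-1$.

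Next, I would verify that $(A', H)$ meets the remaining hypotheses of Theorem \ref{restricted-sumset-inverse-thm} with parameter $k-1$: one has $|A'| = k-1 \geq 6$ (since $k \geq 7$), $r \geq 2$, and $h_{r} \leq k-2 = (k-1) - 1$. Applying the inverse theorem to $(A', H)$ then yields $H = h_{1} + [0, r-1]$ and $A' = \min(A') \cdot [1, k-1]$. Since $\min(A') = \min(A \setminus \{0\})$, adjoining $0$ gives $A = \min(A \setminus \{0\}) \cdot [0, k-1]$, which is the desired conclusion.

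The main, and really only, obstacle is the algebraic bookkeeping: one must confirm that after subtracting $h_1$ from the equality and relabelling $k \to k-1$, the expression $\sum_{i=1}^{r}(h_{i}-h_{i-1})(k-h_{i}-1)+r$ agrees with $\sum_{i=1}^{r}(h_{i}-h_{i-1})\bigl((k-1)-h_{i}\bigr)+r$, which is immediate from $(k-1)-h_{i} = k-h_{i}-1$. Once this identification is in place, the corollary is a clean transfer of the inverse theorem from sets of positive integers to sets of nonnegative integers containing $0$, and no further structural analysis is needed.
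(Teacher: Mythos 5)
There is a genuine gap: the statement has two assertions, and your argument only addresses the second one. Your entire proof begins by writing $A=\{0\}\cup A'$, i.e., it assumes $0\in A$; the first assertion (where $0\notin A$ and the conclusion is $A=d\cdot[1,k]$) is never treated. For that case one must apply Theorem \ref{restricted-sumset-inverse-thm} directly to $A$ itself (a set of $k$ positive integers) with the set of positive elements $[1,r]$ of $H$, after observing that $H\,\hat{}A=\{0\}\cup[1,r]\,\hat{}A$ with $0\notin[1,r]\,\hat{}A$, so that $|[1,r]\,\hat{}A|=|H\,\hat{}A|-1=rk-\frac{r(r-1)}{2}=\sum_{i=1}^{r}(i-(i-1))(k-i)+r$, which is exactly the equality hypothesis of the inverse theorem. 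What you do prove (the $0\in A$ half) is, moreover, essentially a re-derivation of Corollary \ref{restricted-sumset-inverse-cor}; the paper's own route is simply to cite Theorem \ref{restricted-sumset-inverse-thm} and Corollary \ref{restricted-sumset-inverse-cor} as already covering both halves once $H=[0,r]$ is specialized.

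A secondary issue is that you never reconcile the fact that here $H=[0,r]$ contains $0$, whereas every result you invoke requires $H$ to be a set of positive integers. This is not mere bookkeeping: the contribution of $0\,\hat{}A=\{0\}$ is precisely what separates the two numerical thresholds $rk-\frac{r(r-1)}{2}+1$ and $rk-\frac{r(r+1)}{2}+1$. When $0\notin A$ the singleton $\{0\}$ is disjoint from $[1,r]\,\hat{}A$ and accounts for the extra $+1$; when $0\in A$ it is absorbed into $1\,\hat{}A=A$, so $H\,\hat{}A=[1,r]\,\hat{}A$ and no adjustment is needed. Your computation with $\sum_{i=1}^{r}(h_{i}-h_{i-1})(k-h_{i}-1)+h_{1}+r$ silently identifies $H$ with its $r$ positive elements $h_{i}=i$; you should state this identification and justify it in each of the two cases before the reduction to the earlier results is legitimate.
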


	\section{Acknowledgement}
	We would like to thank Professor R. Thangadurai for carefully going
	through this manuscript. We also acknowledge the support of the PDF
	scheme of Harish-Chandra Research Institute, Prayagraj.

	\bigskip
	\hrule
	\bigskip
	
	\noindent 2020 {\it Mathematics Subject Classification}:
	Primary 11P70; Secondary 11B75, 11B13.
	
	\noindent \emph{Keywords:} sumset, restricted sumset.

\end{document}